\DeclareMathOperator{\Var}{Var}
\DeclareMathOperator{\num}{num}
\newtheoremstyle{nonum}{}{}{\itshape}{}{\bfseries}{.}{ }{\thmnote{#3}}
\theoremstyle{nonum}
\newtheorem{thm}{}
\newtheorem{lemma}{}
\newtheorem{proposition}{}
\begin{document}
\title{An elementary proof of convergence to the mean-field equations for an epidemic model}

\author{Benjamin Armbruster$^{\dag}$\footnote{Email addresses: armbrusterb@gmail.com (Benjamin Armbruster), ebeck@u.northwestern.edu (Ekkehard Beck)} $ $ 
and Ekkehard Beck$^{\dag}$}
\affil{$^{\dag}$Department of Industrial Engineering and Management Sciences, \\Northwestern University, Evanston, IL, 60208, USA \\}
\date{\today}

\maketitle

\begin{singlespacing}
\begin{abstract}
It is common to use a compartmental, fluid model described by a system of ordinary differential equations (ODEs) to model disease spread.  In addition to their simplicity, these models are also the mean-field approximations of more accurate stochastic models of disease spread on contact networks.  For the simplest case of a stochastic susceptible-infected-susceptible (SIS) process (infection with recovery) on a complete network, it has been shown that the fraction of infected nodes converges to the mean-field ODE as the number of nodes increases.  However the proofs are not simple, requiring sophisticated probability, partial differential equations (PDE), or infinite systems of ODEs.  We provide a short proof in this case for convergence in mean-square on finite time-intervals using a system of two ODEs and a moment inequality and also obtain the first lower bound on the expected fraction of infected nodes.
\end{abstract}
\end{singlespacing}

\section{Introduction} 
Interactions among individuals impact the transmission of infectious diseases, and the structure of the contact network imposes an important constraint on the transmission dynamics (\citealt{Keeling2005}). Models used in epidemiology to study disease transmission include mean-field models using ordinary differential equation (ODE) (\citealt{AndMay1991,Eames2002}), Markov or state-based models (\citealt{Bailey:1975}), and individual-based simulation models (\citealt{Goodreau2012,Keeling2005}).  These capture the contact network structure at different complexity levels and differ in analytical tractability, computational performance, and accuracy (\citealt{Keeling2000,Keeling2005,Bansal2007}). When choosing a particular type of model, whether mean-field models or individual-based simulations, the researcher should not only take into account the disease being transmitted, the size of the target population, the availability of data about the underlying network structure, the desired level of analytical tractability, and computational performance, but also the accuracy of the chosen model (\citealt{Bansal2007, Keeling2005b,Keeling2005}). 

The vast majority of studies, which assessed the accuracy of mean-field models, only relied on numerical experiments (\citealt{simon2010exact}).  However, the rigorous analysis of the accuracy of mean-field models requires deriving a mathematical link between mean-field approximations and the exact Markov models. \citet{Kurtz1970,Kurtz1971} are the first to show this (see also \citet{Ethier2005}).  They use operator semigroup and martingale techniques to prove that the exact models converge in probability over finite time-intervals to the solution of the mean-field ODE, using as a specific example the stochastic SIS-process on a complete network with increasing number of nodes.
\citet{simon2010exact} summarize the proof in \citet{Ethier2005} without going into the technical details.

\citet{simon2011exact} showed for the same setting as ours (i.e., a stochastic SIS-process on a complete network) based on the idea outlined in \citet{Diekmann2000}, that with increasing number of nodes the expected infected fraction converges to the solution of the mean-field ODE model using a partial differential equation (PDE) approach. This approach exploits the fact, that for a large number of nodes $n$, the discrete probability distribution $x_k(t)$ of the Markov model (i.e., $x_k(t)$ denotes the probability of having $k$ infected nodes) can be approximated using a continuous density function $\rho(t,k/n)$. The convergence is then proved by showing that the expected infected fraction of the resulting first-order PDE converges both to the solution of the mean-field model and to the expected fraction of the Markov model for large $n$.  A more detailed description of this proof is given in Chapter 2 in \citet{TaylorPhD2012}. 

Motivated by the first-order PDE approach and the stochastic approach, which both draw from multiple different mathematical areas, \citet{simon2010exact} introduced an ODE-based approach to show that the expected infected fraction of the Markov model converges uniformly on finite time-intervals to the solution of the mean-field ODE model.  Establishing an infinite homogeneous linear system of ODEs to approximate all moments of the probability distribution $x_k(t)$, \citet{simon2010exact} show using a perturbation theorem for the infinite system of ODEs that the mean-field solution is an upper bound of the expected infected fraction in the Markov model. Further, they provide a bound dependent on the network size $n$ for the absolute difference in the expected infected fraction between the mean-field model and the Markov model.  Note that neither the PDE nor the infinite ODE approach show that the distribution converges to the mean-field model.

In this paper, we provide a short proof showing that as the population size increases, the dynamics of the infected fraction of the stochastic SIS-process on a complete graph converges uniformly in mean-square on finite time-intervals to the mean-field ODE model.  We also provide the first lower bound for the expected fraction of infected nodes.  Note that convergence in mean-square implies convergence in probability and hence in distribution, thus providing a stronger result than previous proofs of \citet{Kurtz1970,Kurtz1971}, \citet{Ethier2005}, \citet{simon2011exact}, and  \citet{simon2010exact}.  More importantly, we only use a system of two ODEs, basic ODE techniques, and Jensen's inequality in our proof.  We hope our short proof using only elementary tools opens up the field to more applied researchers that might not be comfortable using martingale or PDE theory, provides the basis for further mean-field convergence results, and increases understanding about the performance of mean-field models.

\section{Mean-square convergence result} 
Consider a complete graph of $n$ nodes and let $X_n(t)$ be a Markov process describing which nodes in the network are infected at time $t$.  An infected node infects each neighbor at a rate $\tau/n$ and recovers at a rate $\gamma$.  We let $I_n(t):=\num_I (X_n(t))$ be the number of infected nodes and $i_n(t):=I_n(t)/n$ the infected fraction.  We may sometimes drop the dependence on $t$ and $n$. 

Our goal is to prove the following theorem.  It shows that the infected fraction converges in mean-square on finite time-intervals to the mean-field approximation as the population increases, and it gives upper and lower bounds on the expectation of the infected fraction.

\begin{thm}[Theorem 1]
If $i_n(0)=u$ for all $n$ where $u\in[0,1]$, then as $n\rightarrow\infty$,
$i_n(t)$ converges uniformly in mean-square, i.e., 
\[
E[|i_n(t)-y(t)|^2]\rightarrow 0
\]
on any time finite interval $[0,T]$ to the solution of the mean-field
equations: 
\begin{equation} \label{eq:1}
y'=\tau y(1-y)-\gamma y,\quad y(0)=u. 
\end{equation}
In addition we have the following bounds for $t\geq 0$,
\begin{subequations}\label{eq:bounds}
\begin{gather}
y(t)\geq E[i_n(t)]\geq z_{1,n}(t),\label{eq:boundsi}\\
z_{2,n}(t) \geq E[i_n(t)^2] \geq z_{1,n}(t)^2,
\end{gather}\end{subequations}
where $(z_{1,n}(t),z_{2,n}(t))$ solves the initial value problem:
\begin{subequations}\label{eq:zIVP}
\begin{align}
	z_{1,n}'&=\tau(z_{1,n}-z_{2,n})-\gamma z_{1,n},\\
	z_{2,n}'&=2\tau(z_{2,n}-z_{2,n}^{1.5})-2\gamma z_{2,n}+(1/n)(\tau+\gamma),\\
	&z_{1,n}(0)=u,\ z_{2,n}(0)=u^2.
\end{align}
\end{subequations}
\end{thm}

Of particular interest is the lower bound on $E[i_n]$ in \eqref{eq:boundsi}.  
All the bounds in \eqref{eq:bounds} are new, except for $y(t)\geq E[i_n(t)]$, which can also be found for example in the proof of Theorem 3.1 of \citet{Ganesh2005} or Proposition 5.3 of \citet{simon2010exact}, but even there, our approach is new.

\section{Proof of Theorem 1} 
Since the theorem holds trivially for $i_n(0)=u=0$, we now assume
$u>0$. To deal with another technicality, the initial value problems
\eqref{eq:1} and \eqref{eq:zIVP}
have unique solutions
because their right hand sides are smooth (i.e., continuously
differentiable). 
Since $y$ is deterministic, we will show mean-square
convergence by proving that $E[i_n(t)]\rightarrow y(t)$ and $E[i_n(t)^2]\rightarrow y(t)^2$
(or equivalently $\Var[i_n(t)]\rightarrow 0$), uniformly on $[0,T]$.
We show these limits by proving the bounds \eqref{eq:bounds} and that $z_{1,n}(t)\to y(t)$ and $z_{2,n}(t)\to y(t)^2$.

Let $SI(t):=\num_{SI}(X(t))$ be the number of edges in the network with
both a susceptible and infected endpoint. 
We start with the following standard proposition, which we prove in the appendix.

\begin{proposition}[Proposition 2]\label{prop2}
$E[I]'=(\tau/n)E[SI]-\gamma E[I]$ and 
\[
E[I^2]'=(\tau/n)E[SI((I+1)^2-I^2)]+\gamma E[I((I-1)^2-I^2)].
\]
\end{proposition}

In the case of a complete graph, $SI=I(n-I)$. Substituting into \nameref{prop2} and using $i=I/n$, we have 
\begin{subequations}
\begin{align}\label{eq:i}
E[i]'&=\tau(E[i]-E[i^2])-\gamma E[i],\\
\label{eq:i2}
E[i^2]'&=2\tau(E[i^2]-E[i^3])-2\gamma E[i^2]+(1/n)(\tau(E[i]-E[i^2])+\gamma E[i]).
\end{align}
\end{subequations}
 Applying Jensen's inequality, $E[i]^2\le E[i^2]$, to \eqref{eq:i},
\begin{equation}
E[i]'\le\tau E[i](1-E[i])-\gamma E[i].
\end{equation}

Then a standard ODE comparison theorem, \nameref{lem3}, justifies our intuition
that the solution to the mean-field equations \eqref{eq:1} is an upper
bound: $E[i(t)]\le y(t)$ for $t\ge0$.

\begin{lemma}[Lemma 3]\label{lem3}
Suppose that $a(t)$ and $b(t)$ are scalar; $f$ is smooth; $a(0)\le b(0)$;
and $a'(t)\le f(a(t))$ and $b'(t)=f(b(t))$ for $0\le t\le T$. Then
$a(t)\le b(t)$ for $0\le t\le T$.
\end{lemma}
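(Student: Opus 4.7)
The plan is to prove the comparison result by tracking the sign of the difference $d(t) := a(t) - b(t)$, with the aim of showing $d(t) \le 0$ throughout $[0,T]$ given the starting condition $d(0) \le 0$. The guiding intuition is that whenever $d(t) \ge 0$, the inequality $d'(t) \le f(a(t)) - f(b(t))$ combined with a Lipschitz bound on $f$ will prevent $d$ from crossing zero upward.

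First I would extract a one-sided Lipschitz constant from the smoothness hypothesis. Since $a$ and $b$ are continuous on the compact interval $[0,T]$, their images lie in some bounded set, and since $f$ is $C^1$, its derivative is bounded on this set by some constant $L$; by the mean value theorem, $f(x) - f(y) \le L(x - y)$ whenever $x \ge y$ in this range. Then I would proceed by contradiction: assume $d(t^\ast) > 0$ for some $t^\ast \in (0,T]$, and use continuity of $d$ together with $d(0) \le 0$ to define $t_0 := \sup\{t \in [0,t^\ast] : d(t) \le 0\}$, so that $d(t_0) = 0$ and $d(t) > 0$ on $(t_0, t^\ast]$. On this last interval, the one-sided Lipschitz bound applies (since $a(t) > b(t)$ there) and gives $d'(t) \le L\, d(t)$.

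The endgame is a standard Gr\"onwall step: multiplying by the integrating factor $e^{-Lt}$ and integrating from $t_0$ to any $t \in (t_0, t^\ast]$ yields $d(t) \le d(t_0) e^{L(t-t_0)} = 0$, contradicting $d(t^\ast) > 0$; hence $d(t) \le 0$ on $[0,T]$. The only real subtlety is that the bound on $f(a)-f(b)$ has the right sign only when $a \ge b$, so one cannot directly write $|d'| \le L|d|$ and hope for the best; instead the contradiction setup cleanly isolates the interval on which the one-sided bound is applicable. Everything else relies only on continuity, the mean value theorem, and an exponential integrating factor, which keeps the argument within the elementary toolkit the paper advertises.
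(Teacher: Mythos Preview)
Your argument is correct: extracting a local Lipschitz bound from smoothness on a compact range, isolating via $t_0$ a maximal interval where $d>0$, and closing with the integrating-factor form of Gr\"onwall is exactly the standard route, and you handle the one-sided sign issue cleanly. The paper does not actually prove this lemma---it simply cites a graduate ODE text (Hale, Theorem~6.1)---so your write-up supplies the details the paper omits, and it is precisely the kind of argument one finds behind such references.
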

\begin{proof}See any graduate ODE text such as Theorem 6.1 in \citet{hale2009ordinary}.\end{proof}

To obtain a lower bound we apply Jensen's inequality, $E[i^2]^{1.5}\le E[i^3]$
and $E[i]^2\le E[i^2]$, to \eqref{eq:i2},
\begin{equation}\label{eq:6}
E[i^2]'\le 2\tau(E[i^2]-E[i^2]^{1.5})-2\gamma E[i^2]+(1/n)(\tau E[i](1-E[i])+\gamma E[i]).
\end{equation}
Using the fact that $E[i]$ and $1-E[i]$ are in $[0,1]$, we obtain 
\begin{equation}\label{eq:10}
E[i^2]'\le 2\tau(E[i^2]-E[i^2]^{1.5})-2\gamma E[i^2]+(1/n)(\tau+\gamma).
\end{equation}
Applying \nameref{lem3}, we have $E[i_n(t)^2]\le z_{2,n}(t)$ for $t\geq 0$, where $z_{2,n}(t)$ solves the initial value problem:  
\begin{equation}\label{eq:11}
z_{2,n}'=2\tau(z_{2,n}-z_{2,n}^{1.5})-2\gamma z_{2,n}+(1/n)(\tau+\gamma),\quad z_{2,n}(0)=u^2.
\end{equation}
Now going back to \eqref{eq:i}, we have a lower bound for $E[i]'$,
\begin{equation}\label{eq:12}
E[i]'\ge\tau(E[i]-z_{2,n})-\gamma E[i]
\end{equation}
Applying \nameref{lem3} here by treating $z_{2,n}(t)$ as an exogenous function, we have
$E[i_n(t)]\ge z_{1,n}(t)$ for $t\geq 0$, where $z_{1,n}(t)$ solves the initial
value problem: 
\begin{equation}\label{eq:13}
z_{1,n}'=\tau(z_{1,n}-z_{2,n})-\gamma z_{1,n},\quad z_{1,n}(0)=u.
\end{equation} 
Our last lower bound $E[i_n^2]\geq E[i_n]^2\geq z_{1,n}^2$ then follows from Jensen's inequality.

Having proven the bounds,
the next step is to show that $z_{1,n}(t)\rightarrow y(t)$ and $z_{2,n}(t)\rightarrow y(t)^2$
uniformly on $[0,T]$. 
Note that $\bar{z}=(y,y^2)$ is a solution to the system,
\begin{subequations}
\label{eq:8}
\begin{align}
& \bar{z}'_1=\tau(\bar{z}_1-\bar{z}_2)-\gamma \bar{z}_1, \label{eq:81} \\
& \bar{z}'_2=2\tau(\bar{z}_2-\bar{z}_2^{1.5})-2\gamma \bar{z}_2, \label{eq:82}\\
& \bar{z}'_1=u,\quad  \bar{z}_2=u^2, \label{eq:83}
\end{align}
\end{subequations}
because \eqref{eq:81} and \eqref{eq:83} become \eqref{eq:1}
and the right hand side of \eqref{eq:82} ends up being $2yy'$, which equals $(y^2)'$ as we require.
Uniqueness holds because the right hand side is smooth.
Substituting $\epsilon=1/n$, the right hand side of \eqref{eq:zIVP} is also smooth in $\epsilon$ and it converges uniformly to the right hand side of \eqref{eq:8} 
as $\epsilon\to 0$ (i.e., $n\to\infty$).
Then by the
results about the continuous dependence on parameters, the ODE solutions converge, $(z_{1,n},z_{2,n})\to (y,y^2)$ uniformly on $[0,T]$, proving the claim.

\section{Conclusion} 

We gave a short proof showing that as the population size increases, the dynamics of the infected fraction of the stochastic SIS-process on a complete graph converges uniformly in mean-square on finite time-intervals to the mean-field ODE model. Using the ODE from \nameref{prop2} to describe the dynamics of the first moment (the expected number of nodes infected) and Jensen's inequality to approximate the second moment in this equation, we showed that the solution of the mean-field ODE model provides an upper bound to the expected infected fraction. Further, we established the first lower bound on the expected fraction infected.  The lower bound is from a system of two ODEs: the ODE describing the dynamics of the first moment combined with an ODE describing the dynamics of the second moment, which we bounded using Jensen's inequality. Finally, uniform convergence in mean-square was shown by proving that the lower bound (i.e., the solution of the system of two ODEs approximating the dynamics of the first two moments) converges to the infected fraction and its square in the mean-field ODE model. Thus, the approach presented provides both a stronger convergence result compared to previous approaches showing convergence in probability or convergence of the expected value as well as a shorter, more elementary proof.

Mean-field models appear in many fields, not just epidemic modeling, and if such an elementary approach to proving convergence is generalizable, it would have many users.
However, one limitation of our approach is that \nameref{lem3} about ODE inequalities only holds for scalar ODEs and the argument we used relied on the second equation not depending on the first.  In cases where a mean-field model leads to a bound involving a system of more and less simple ODEs, some other argument may be needed.  Developing such arguments to generalize this elementary approach beyond the SIS-model to more complicated epidemic models is an area for future work.

\paragraph{Acknowledgements.} We thank Peter L. Simon, Istvan Z. Kiss, and two anonymous reviewers for helpful comments. 

\pagebreak
\bibliography{references}
\pagebreak
\appendix
\section*{Appendix}

Our proof of Proposition 2 is shorter and more elementary than the original proof by \citet{Rand1999}, but see also \citet{simon2011exact} for a more formulaic proof.

\begin{proposition}[Proposition 2]
$E[I]'=(\tau/n)E[SI]-\gamma E[I]$ and 
\[
E[I^2]'=(\tau/n)E[SI((I+1)^2-I^2)]+\gamma E[I((I-1)^2-I^2)].
\]
\end{proposition}
\begin{proof} In network configuration $x$, the system can either transition
to states $x^+$ with $\num_I(x^+)=\num_I(x)+1$ or to states $x^-$ with
$\num_I(x^-)=\num_I(x)-1$ (i.e., only to states with $\pm 1$ infected
nodes). The aggregate rate for the first is $\gamma \num_I(x)$ and
$(\tau/n)\num_{SI}(x)$ for the second, proving that
\begin{equation}\label{eq:prop2a} \lim_{h\rightarrow 0} (E[I(h)|I(0)=x]-\num_I(x))/h=(\tau/n)\num_{SI}(x)-\gamma \num_I(x).
\end{equation}
The transition
rates and times of $I(t)^2$ are the same as those of $I(t)$, just now the jump sizes are $(I\pm1)^2-I^2$
instead of $\pm1$:
\begin{multline}\label{eq:prop2b}
\lim_{h\rightarrow 0} (E[I(h)^2|I(0)=x]-\num_I(x)^2)/h\\
=(\tau/n)\num_{SI}(x)((x+1)^2-x^2)+\gamma \num_I(x)((x-1)^2-x^2).
\end{multline}
Multiplying both sides of \eqref{eq:prop2a} and \eqref{eq:prop2b}
by $P[X(0)=x]$ and summing over $x$ (unproblematic since it is a finite sum) proves the claims.
\end{proof}
\end{document}